\journalname{Beitr\"age zur Algebra und Geometrie}
\newcommand{\eps}{\varepsilon}
\begin{document}

\addtolength{\textheight}{-3mm}

\title{
Optimal Triangulation of Saddle
 Surfaces
\thanks{%
Dror Atariah and G\"unter Rote were
supported by Deutsche Forschungsgemeinschaft (DFG) within the DFG Collaborative Research Center TRR 109 \emph{Discretization in Geometry and
Dynamics}.
Mathijs Wintraecken
was supported by the Future and Emerging Technologies
(FET) program within the Seventh Framework
Program for Research of the European Commission,
under FET-Open grant number 255827, as part of the
project \emph{Computational Geometric Learning}.
Partial support has been provided by the Advanced Grant of the European Research Council GUDHI (\emph{Geometric Understanding in Higher Dimensions}).
}}

\author
{Dror Atariah\and
G\"unter Rote\and
Mathijs \rlap{Wintraecken}}
\authorrunning
{Dror Atariah,
G\"unter Rote, and
Mathijs Wintraecken}

\institute
{Dror Atariah\at
Freie Universit\"at Berlin, Institut f\"ur Informatik,
\email{drorata@gmail.com}           
\and
G\"unter Rote
\at
Freie Universit\"at Berlin, Institut f\"ur Informatik,
\email{rote@inf.fu-berlin.de}
\and
Mathijs Wintraecken
\at
INRIA Sophia Antipolis, Geometrica,
\email{m.h.m.j.wintraecken@gmail.com}
}

\date{\today} %
\date{July 27, 2017}
\maketitle
\begin{abstract}
\noindent
  We consider the piecewise linear approximation of
  saddle functions of the form $f(x,y)=ax^2-by^2$ under the
$L_{\infty}$ error norm. We show that interpolating approximations
are not optimal. One can get slightly 
smaller errors 
 by
allowing the vertices of the approximation to move away from the graph
of the function.

\keywords{Polyhedral approximation \and
Optimal triangulation
\and Saddle surface \and  Negative curvature}
\subclass{%
52A15 \and
%
65D05 \and 
97N50
}
\end{abstract}










\section{Introduction}

We are given the 
 bivariate quadratic function
\begin{equation}
  \label{eq:f}
f(x,y)=ax^2+2bxy+cy^2+dx+ey+g  ,
\end{equation}
and we want to approximate it with a piecewise linear function $\hat
f(x,y)$ which is as simple as possible. 
More precisely, the function $\hat f$
that we are looking for is
defined by a triangulation $\mathcal{T}$ of the plane and the values
$\hat f(x,y)$ at the vertices $(x,y)
$ of the
triangulation.
 We
want to minimize the number of triangles.  The error criterion that we
consider is the $L_\infty$ distance or \emph{vertical distance}
(thinking geometrically in the three-dimensional space where the graph
of $f$ lives); it 
should be bounded by some specified parameter $\eps$:
\begin{equation*}
\max_{x,y} | f(x,y)-\hat f(x,y) |
\le \eps
\end{equation*}
For simplicity, we will let $(x,y)$ range over the whole plane.
Thus, we cannot just count the triangles. We rather minimize the
\emph{triangle density}. Let $Q_r=[-r/2,r/2]\times [-r/2,r/2]$ be the
$r\times r$ square centered at the origin.
The triangle density counts the number of triangles
$T\in \mathcal{T}$ of the triangulation that
 intersect
the squares $Q_r$ for larger and larger side length $r$,
 in comparison to the area of these squares:
\begin{equation*}
\limsup_{r\to\infty}
\frac{| \{\,T\in \mathcal{T} \mid 
T \cap Q_r \ne \emptyset\,\}|}
{r^2}
\end{equation*}

We have three cases:
\begin{enumerate}
\item 
 $f$ is a positive or negative definite quadratic function; in other words, $f$ is
convex or concave.
\item 
 $f$ is indefinite; the graph of $f$ is a saddle surface, and it has negative Gauss curvature.
\item 
 $f$ is semidefinite; its graph is a parabolic cylinder.
\end{enumerate}
The cases can be distinguished by the 
 discriminant $ac-b^2$
being positive, negative, or zero.

Case~1 
 is easy; the classical theory of piecewise linear convex
approximation applies.
 We will mention the respective results below.
Case~3, 
as well as the case of a linear function ($a=b=c=0$), is a
boundary case, and we will not treat it. 
We will concentrate on
Case~2, 
which is representative of negatively curved surfaces in 3
dimensions:

\begin{theorem}\label{main}
  If $f$ is indefinite $(ac-b^2<0)$, then there is a piecewise
  linear function $\hat f$ approximating $f$ with vertical error
  $\eps$ that has triangle density
$$
 \frac{\sqrt 3}{4}
\cdot\sqrt{b^2-ac}
\cdot \frac1\eps
 \approx 0.43301\cdot \sqrt{b^2-ac}\bigm/\eps.
$$
The triangulation consists of a grid of congruent triangles like in
Figure~\ref{grid}. This grid can be freely translated in the plane,
and in addition, there is a one-parameter family of solutions of
different shapes with the
same properties.
\end{theorem}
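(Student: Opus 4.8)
The plan is to reduce to a normalized saddle function, write down the grid explicitly, reduce the verification of the error bound to a single oscillation computation, and then do the bookkeeping for the density and the families of solutions.

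First I would normalize $f$. Adding an affine function to $\hat f$ changes neither the triangulation nor its density, so I may subtract from $f$ its affine part and assume $f=ax^2+2bxy+cy^2$. A linear substitution $x\mapsto Mx$ replaces $f$ by $f\circ M^{-1}$, divides all areas — hence the triangle density — by $|\det M|$, and also divides $\sqrt{b^2-ac}=\tfrac12\sqrt{|\det\operatorname{Hess}f|}$ by $|\det M|$; thus the ratio $(\text{density})/\sqrt{b^2-ac}$ is a linear invariant. Since $b^2-ac>0$, a suitable $M$ (diagonalize the form, then rescale its two eigendirections) brings $f$ to the form $x^2-y^2$, where $\sqrt{b^2-ac}=1$, so it suffices to exhibit, for $f(x,y)=x^2-y^2$, an approximation with error $\le\eps$ and density $\tfrac{\sqrt3}{4\eps}$.

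The construction: let $\mathcal T$ be the regular equilateral triangular grid of edge length $a:=4\sqrt{\eps/3}$, placed so that one of its three edge directions is parallel to the $x$-axis (a principal axis of $f$), and let $\hat f$ be affine on each triangle with the \emph{uniform} vertex values $\hat f(v):=f(v)-\eps/3$. Since the same constant $\eps/3$ is subtracted at every vertex, $\hat f$ is the piecewise-linear interpolant of $f$ minus $\eps/3$, hence an honest continuous piecewise-linear function — so the construction is non-interpolating, with the graph of $\hat f$ pushed slightly off that of $f$, as announced in the abstract. To check the error, write $f-\hat f=\psi_T+\eps/3$ on a triangle $T$, where $\psi_T:=f-I_Tf$ is the quadratic (with the fixed Hessian of $f$) that vanishes at the three vertices. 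The range of $\psi_T$ over $T$ is unchanged by translating $T$ (the Hessian is translation invariant, so $\psi_{T+w}=\psi_T(\cdot-w)$) and by $180^\circ$-rotating $T$ about an edge midpoint (such a rotation alters $x^2-y^2$ only by an affine function, which is absorbed into $I_T$); as every triangle of the grid is obtained from one fixed up-triangle by these two operations, all of them carry the same range $[m,M]$. A completion of squares gives $[m,M]=[-a^2/4,\ a^2/8]$, the minimum attained at the midpoint of the horizontal edge and the maximum at the midpoints of the two slanted edges. Hence $f-\hat f$ lies in $[m+\eps/3,\ M+\eps/3]=[-3a^2/16,\ 3a^2/16]$ everywhere, and $3a^2/16=\eps$ by the choice of $a$, so $\|f-\hat f\|_\infty=\eps$. (The shift $\eps/3=a^2/16$ is exactly the midrange correction $-(m+M)/2$; it genuinely helps precisely because $[m,M]$ is not symmetric about $0$.)

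Finally, each triangle of $\mathcal T$ has area $\tfrac{\sqrt3}{4}a^2=\tfrac{4\eps}{\sqrt3}$, so the number meeting $Q_r$ is $r^2/(4\eps/\sqrt3)+O(r)$ and the density is $\tfrac{\sqrt3}{4\eps}$; transporting the construction back through $M^{-1}$ multiplies the density by $1/|\det M|$ and preserves $(\text{density})/\sqrt{b^2-ac}$, giving $\tfrac{\sqrt3}{4\eps}\sqrt{b^2-ac}$ for the original $f$. The grid may be translated arbitrarily (only the re-derived affine pieces change), and the one-parameter group of ``Lorentz boosts'' $(x,y)\mapsto(x\cosh t+y\sinh t,\ x\sinh t+y\cosh t)$ preserves $x^2-y^2$ and has determinant $1$, so applying a boost to $\mathcal T$ and $\hat f$ yields further solutions with the same error and density but non-equilateral triangles — the stated one-parameter family of shapes. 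The step I expect to be the real obstacle is pinning down the grid's orientation together with the $\psi_T$-oscillation computation: for the wrong orientation the interval $[m,M]$ is wider and the density worse, so one must verify that placing an edge along a principal axis is the efficient choice, and keep careful track of which per-triangle affine corrections are realizable as one global vertex shift. (Proving this density is also \emph{optimal}, as the paper's title promises, is a separate and harder matter, since the naive lower bound coming from the best affine approximant on a single triangle is not tight: in the extremal grid neighbouring triangles constrain one another, so the average admissible triangle is strictly smaller than the largest admissible single one.)
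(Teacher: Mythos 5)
Your proof is correct and produces exactly the paper's triangle: in the paper's normalization $f=2xy$ the optimal triangle is equilateral with uniform vertex offset $\Delta z=-\eps/3$ and area $4\eps/\sqrt3$, which under the $45^\circ$ rotation to $x^2-y^2$ becomes your grid with one edge along a principal axis and edge length $4\sqrt{\eps/3}$. The two routes differ in logical direction, though. The paper \emph{derives} the triangle: it fixes a uniform offset $\Delta z$, reduces to a single maximum-area triangle via Lemma~\ref{area}, computes the per-edge errors with Lemma~\ref{chord2}, shows by a concavity argument that all three edge constraints must be tight, symmetrizes by a pseudo-Euclidean transformation, and finally optimizes over $\Delta z$; this simultaneously establishes optimality within the uniform-offset class. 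You instead \emph{exhibit} the grid and verify it, by computing the oscillation $[-a^2/4,\,a^2/8]$ of $f-I_Tf$ on one triangle from the midpoint values $-f(d)/4$ on its three edges (interior extrema being excluded because the Hessian is indefinite, which is the content of the paper's unnumbered lemma at the start of Section~\ref{sec:negative}) and noting that the offset $-\eps/3$ is exactly the midrange correction. This is leaner and fully sufficient for the theorem as stated, which only asserts existence; what it omits is the derivation showing \emph{why} this shape and offset are extremal in their class (full optimality is left open by the paper as well, as you note). Your closing worry about having to justify the grid orientation is moot for an existence proof: your chosen orientation verifiably achieves error $\eps$, and nothing more is required. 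Finally, two reassurances where your numbers may seem to clash with the paper: your density $\sqrt3/(4\eps)\cdot\sqrt{b^2-ac}$, with $\eps$ in the denominator, is the correct reading of the displayed bound, since it agrees with the paper's own $1/A$ for $A=4\eps/\sqrt3$; and your edge length $\sqrt{16\eps/3}$, rather than the $\sqrt{8\eps/3}$ stated in Section~\ref{sec:non-interpolating}, is what actually follows from the paper's formula for $p_2$ at $\Delta z=-\eps/3$.
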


If we require an \emph{interpolating}
approximation, the error of the approximation is slightly larger,
as stated in the following theorem.
Here, the vertices of $\hat f$
are constrained to lie on the given surface, i.e., $\hat f(x,y)=f(x,y)$
for
all vertices $(x,y)
$ of the
triangulation.
%
\begin{theorem}
\label{interpolating}
  If $f$ is indefinite $(ac-b^2<0)$, then there is a piecewise
  linear interpolating function $\hat f$ approximating $f$ with vertical error
  $\eps$ that has triangle density
$$
 \frac{1}{\sqrt{5}}\cdot\sqrt{b^2-ac}\cdot \frac1\eps
 \approx 0.44721
\cdot \sqrt{b^2-ac}\bigm/\eps,
$$
and this bound is best possible.
\end{theorem}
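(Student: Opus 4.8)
The plan is to reduce to the model saddle $f(x,y)=xy$, to determine the interpolation error on a single triangle exactly, and then to solve a small extremal problem on a cube. First I would normalise: the affine part $dx+ey+g$ can be absorbed into $\hat f$ without changing the approximation or its error, so we may assume $f(x,y)=ax^2+2bxy+cy^2$. Since this quadratic form has signature $(1,1)$, there is a linear substitution $(x,y)=M(x',y')$ carrying it to $x'y'$, and a determinant computation gives $|\det M|=1/(2\sqrt{b^2-ac})$. Such a substitution leaves vertical errors unchanged and multiplies every triangle area by $|\det M|$, hence divides the triangle density by $|\det M|$; so the optimal interpolating density for $f$ equals $2\sqrt{b^2-ac}$ times the optimal interpolating density for $xy$, and it suffices to treat $f=xy$.

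The key structural fact is this. On a triangle $T$ with vertices $P,Q,R$ on the graph of $xy$, the interpolant is the affine function $\ell$ agreeing with $xy$ at those three points, and $h:=xy-\ell$ is again a hyperbolic paraboloid; it has a single critical point, a saddle, hence no interior extremum, so $\max_T|h|$ is attained on $\partial T$. Along an edge with direction vector $(u,v)$, parametrised linearly on $[0,1]$, $h$ is a quadratic with leading coefficient $uv$ vanishing at both endpoints, so it equals $uv\,t(t-1)$, whose extreme value is $\mp uv/4$. Therefore the error of $\hat f$ on $T$ equals $\tfrac14$ times the largest of $|u_iv_i|$ over the three edge vectors $(u_i,v_i)$ of $T$, and the requirement that $\hat f$ interpolate $xy$ with vertical error $\le\eps$ becomes: every edge vector $(u,v)$ of $\mathcal T$ satisfies $|uv|\le 4\eps$. (This reasoning is local to each triangle, so no global combinatorial structure of $\mathcal T$ is needed.)

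Now the one-triangle area bound. Write the three edge vectors of $T$ as $\mathbf u,\mathbf v,\mathbf w$ with $\mathbf u+\mathbf v+\mathbf w=\mathbf 0$ and set $P_u=u_1u_2$, $P_v=v_1v_2$, $P_w=w_1w_2$, each of absolute value $\le 4\eps$. Expanding $P_w=(u_1+v_1)(u_2+v_2)$ gives $u_1v_2+u_2v_1=P_w-P_u-P_v$, so that
\[
4\operatorname{area}(T)^2=(u_1v_2-u_2v_1)^2=(P_w-P_u-P_v)^2-4P_uP_v=P_u^2+P_v^2+P_w^2-2(P_uP_v+P_vP_w+P_wP_u).
\]
This expression is convex in each of $P_u,P_v,P_w$ separately, so over the cube $[-4\eps,4\eps]^3$ it is maximised at a vertex; at the eight vertices its value is $(4\eps)^2\bigl(3-2(\sigma_1\sigma_2+\sigma_2\sigma_3+\sigma_3\sigma_1)\bigr)$, which is largest, $5(4\eps)^2$, at the sign patterns of type $(+,+,-)$. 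Hence $\operatorname{area}(T)\le 2\sqrt5\,\eps$. Since the triangles of $\mathcal T$ tile $Q_r$, at least $r^2/(2\sqrt5\,\eps)$ of them meet $Q_r$, so the density is at least $1/(2\sqrt5\,\eps)$; transported back through $M$ this is exactly the density lower bound asserted in Theorem~\ref{interpolating}.

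For the matching construction I would take the lattice with generators $\mathbf a=2\sqrt\eps\,(\varphi,1/\varphi)$ and $\mathbf b=2\sqrt\eps\,(1/\varphi,\varphi)$, where $\varphi=(1+\sqrt5)/2$, triangulated along the diagonal $\mathbf b-\mathbf a=2\sqrt\eps\,(-1,1)$ (using $\varphi-1/\varphi=1$). Each of the three edge vectors $\mathbf a,\mathbf b,\mathbf b-\mathbf a$ has coordinate product of absolute value exactly $4\eps$, so the interpolation error is exactly $\eps$; each triangle has area $2\sqrt5\,\eps$; pulling back by $M$ yields an interpolating approximation of $f$ with vertical error $\eps$ realising the density of Theorem~\ref{interpolating}. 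I expect the main obstacle to be the structural claim of the second paragraph — establishing cleanly that the interpolation error on a triangle is exactly $\tfrac14\max_i|u_iv_i|$, in particular that the extreme value occurs at an edge midpoint and not in the interior — since the entire lower bound rests on converting the error constraint into the clean condition $|uv|\le 4\eps$ on edge vectors.
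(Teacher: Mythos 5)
Your proof is correct, and the decisive step is genuinely different from the paper's. Both arguments share the same skeleton: normalize to a standard saddle, observe that the error has no interior extremum (the Hessian of the difference is indefinite) and equals $\tfrac14|f(\text{edge vector})|$ on each edge (the paper's Lemma~\ref{chord}), and reduce everything to maximizing the area of a single triangle under three hyperbolic constraints on its edge vectors. Where the paper then proves that all three constraints must be tight (a perturbation argument using the concavity of the region bounded by hyperbolic arcs), applies a pseudo-Euclidean map $(x,y)\mapsto(mx,y/m)$ to reach a configuration symmetric about $x=y$, and solves the resulting quartic, you instead write $4\operatorname{area}(T)^2$ as a polynomial in the three edge products $P_u,P_v,P_w$ that is convex in each variable separately, and maximize it over the cube $[-4\eps,4\eps]^3$ at a vertex. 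This is shorter and sharper: it relaxes to a superset of the realizable triples (harmless for an upper bound), dispenses with both the tightness lemma and the symmetry reduction, and identifies the extremal sign pattern $(+,+,-)$ — two ascending edges and one descending — as a byproduct. The paper's longer route pays off elsewhere, exhibiting the full one-parameter family of optimizers and serving as the template for the non-interpolating case. Your construction (the golden-ratio lattice) is exactly the paper's optimal triangle transported to the $f=xy$ normalization, and your constants check out: $|\det M|=1/(2\sqrt{b^2-ac})$, area $2\sqrt5\,\eps$, density $\sqrt{b^2-ac}/(\sqrt5\,\eps)$, which agrees with what the paper's own proof yields (the displayed constant in the theorem statement has $\eps$ where the proof gives $1/\eps$). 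Finally, the ``main obstacle'' you flag at the end is already dispatched by your second paragraph: the saddle-Hessian argument rules out interior extrema, and the restriction of $h$ to an edge is the quadratic $uv\,t(t-1)$, which together give exactly $\max_T|h|=\tfrac14\max_i|u_iv_i|$; this is precisely the paper's pair of lemmas, so there is no gap.
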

This theorem is due to
\citet*{A07-pott-00}, except that the
explicit error bound is not stated there.
For comparison, we state the well-known result for convex functions
(see~\citealt{A07-pott-00}, for example):
\begin{theorem}
\label{convex}
If $f$ is
strictly convex or strictly concave $(ac-b^2>0)$, then there is a
piecewise linear function $\hat f$ approximating $f$ with vertical
error $\eps$ that has triangle density
$$
 \frac{2}{\sqrt{27}}\cdot\sqrt{ac-b^2}\cdot \frac1\eps
\approx 0.38490
\cdot\sqrt{ac-b^2}\bigm/\eps.
$$
If $\hat f$ is required to be interpolating, the bound becomes
$$
 \frac{4}{\sqrt{27}}\cdot\sqrt{ac-b^2}\cdot \frac1\eps
\approx 0.76980
\cdot\sqrt{ac-b^2}\bigm/\eps.
$$
These bounds are best possible.
\end{theorem}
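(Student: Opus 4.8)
\emph{Sketch of the proof.}
The plan is to reduce to the model function $f_0(x,y)=x^2+y^2$ and then to play the regular triangular grid off against a tight area bound for a single triangle. For the reduction, note that the Hessian $H=\bigl(\begin{smallmatrix}2a&2b\\2b&2c\end{smallmatrix}\bigr)$ of $f$ is definite with $\det H=4(ac-b^2)$; choose a linear map $A$ of the plane with $A^{\top}HA=\pm 2I$, so that $|\det A|=1/\sqrt{ac-b^2}$. Under the substitution $(x,y)=A(u,v)$ the function $f$ becomes $\pm(u^2+v^2)$ plus an affine part, and subtracting an affine function from $f$ and $\hat f$ simultaneously changes neither the vertical error, nor piecewise-linearity, nor---since a linear map carries vertices to vertices and the graph of $f$ to that of $f\circ A$---the interpolation property. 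Hence we may assume $f_0(u,v)=u^2+v^2$. A triangulation of the $(u,v)$-plane of density $D$ maps under $A$ to a triangulation of the $(x,y)$-plane of density $D/|\det A|=D\sqrt{ac-b^2}$ with the same vertical error, so it suffices to find the optimal density for $f_0$; the factor $\sqrt{ac-b^2}$ and the prefactors $\tfrac{2}{\sqrt{27}}$, $\tfrac{4}{\sqrt{27}}$ then come out by a direct computation.

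For the upper bound I would take the standard tiling by congruent equilateral triangles. On a triangle $T$ with circumcentre $O_T$ and circumradius $R_T$, the affine interpolant $h_T$ of $f_0$ at the three vertices satisfies $f_0(p)-h_T(p)=|p-O_T|^2-R_T^2$; on an equilateral triangle this ranges over $[-R_T^2,0]$ (minimum at the centroid $O_T\in T$, maximum at the vertices), so the interpolation error on every triangle equals $R_T^2$. Choosing $R_T^2=\eps$ gives triangles of area $\tfrac{3\sqrt3}{4}\eps$, and since the grid is a tiling by congruent triangles its density is the reciprocal of that. For the non-interpolating bound I would lower every vertex value of $\hat f$ by $\tfrac12R_T^2$ (a global constant shift): then $f_0-\hat f$ ranges over $[-\tfrac12R_T^2,\tfrac12R_T^2]$ on every triangle, so the error halves, one may take $R_T^2=2\eps$, and the triangles have area $\tfrac{3\sqrt3}{2}\eps$. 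These grids (which may be translated arbitrarily) realise the claimed densities.

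For the lower bound, observe that on each triangle $T$ of a triangulation $\hat f$ of vertical error at most $\eps$, the restriction $\hat f|_T$ is affine, so $f_0-\hat f|_T$ is a quadratic with Hessian $2I$, i.e.\ of the form $p\mapsto|p-q|^2-C$; then $|f_0-\hat f|\le\eps$ confines $T$ to an annulus $\{\,r_1\le|p-q|\le r_2\,\}$ with $r_2^2-r_1^2\le 2\eps$, and in the interpolating case $q$ is forced to be the circumcentre, the vertices lie on the outer circle, and the constraint sharpens to $r_2^2-r_1^2\le\eps$. Everything then reduces to the geometric inequality
\[
\operatorname{area}(T)\ \le\ \tfrac{3\sqrt3}{4}\Bigl(\max_{p\in T}|p-q|^2-\min_{p\in T}|p-q|^2\Bigr)\qquad\text{for every triangle }T\text{ and every }q\in\mathbb{R}^2,
\]
with equality exactly when $T$ is equilateral and $q$ is its centroid. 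Granting this, every triangle of an admissible triangulation has area at most $A=\tfrac{3\sqrt3}{4}\eps$ in the interpolating case and $A=\tfrac{3\sqrt3}{2}\eps$ in general; since the triangles meeting $Q_r$ cover $Q_r$ there are at least $r^2/A$ of them, so the density is at least $1/A$, matching the construction. The main obstacle is the geometric inequality. For $q\in T$ the inner radius is $0$ and it is the classical fact that the largest triangle contained in a disc of radius $\rho=\max_{p\in T}|p-q|$ is the inscribed equilateral one, of area $\tfrac{3\sqrt3}{4}\rho^2$. For $q\notin T$ one must show that the central hole can only cost area, namely that pressing $T$ against a hole of radius $r_1>0$ (with the outer radius held fixed) forces its circumradius, and hence its area, to decrease by more than $\tfrac{3\sqrt3}{4}r_1^2$; I would establish this by a compactness/variational analysis of the finitely many tight configurations (two or three vertices on the outer circle, a vertex or an edge touching the inner circle), concluding that the supremum in the inequality is attained in the case $q\in T$, $r_1=0$. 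Transporting the constants back through the normalisation then yields the theorem together with the optimality of its bounds.
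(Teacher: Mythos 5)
Your normalization and your upper-bound construction are essentially the paper's: the reduction to $f_0=u^2+v^2$ with the Jacobian factor $\sqrt{ac-b^2}$, the equilateral grid, the identity $f_0-h_T=|p-O_T|^2-R_T^2$, and the constant downward shift by $\tfrac12R_T^2$ in the non-interpolating case all match. Where you diverge is the lower bound, and that is also where the gap sits. You reduce both optimality claims to the single inequality $\operatorname{area}(T)\le\tfrac{3\sqrt3}{4}\bigl(r_2^2-r_1^2\bigr)$ for a triangle confined to an annulus about an arbitrary point $q$, you correctly flag it as the main obstacle, and then you only gesture at a proof (``compactness/variational analysis of the tight configurations''). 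Note that the case $q\notin T$ is needed already for the \emph{interpolating} lower bound, namely for obtuse triangles, whose circumcentre lies outside $T$; so as written neither half of the optimality claim is actually proved. The inequality is true, and if you want to keep your route, the cleanest way to close it is to observe that a triangle disjoint from the open disk $B(q,r_1)$ lies in a half-plane tangent to that disk, hence in a circular segment of $B(q,r_2)$ cut off by a chord at distance $r_1$ from $q$; bounding the largest triangle in such a segment then finishes the argument.

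The paper avoids the annulus inequality altogether by two devices you could adopt. First, for the interpolating case the error over $T$ equals exactly the squared radius of the \emph{smallest enclosing circle} of $T$, not of the circumcircle: for an obtuse triangle the maximum of $C-|p-q|^2$ over $T$ is attained at the midpoint of the longest edge and, by Pythagoras, equals the squared half-length of that edge. Hence error $\le\eps$ holds iff $T$ fits in a disk of radius $\sqrt\eps$, and only the classical ``largest triangle in a disk'' fact is needed. Second, for the non-interpolating case, since an interpolating approximation of a convex function never lies below it, a non-interpolating approximation with error $\eps$ can be converted---add $\eps$, then lower each vertex value to $f$---into an interpolating one with error $2\eps$ on the same triangulation, so the second lower bound follows from the first. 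Your direct annulus argument is more uniform and would carry over to settings where the one-sided trick fails, but it buys this generality at the price of leaving the decisive geometric step unproven.
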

\begin{figure}[tb]
  \centering
  \includegraphics[scale=1.2]{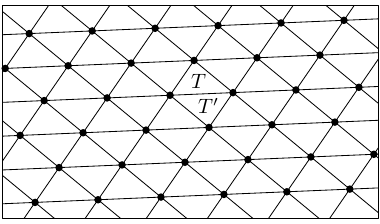}
  \caption{A triangular grid}
  \label{grid}
\end{figure}

As in Theorem~\ref{main}, the triangulations in
Theorems~\ref{interpolating} and~\ref{convex} are triangular grids,
and the statement about the translations and the one-parameter
family of solutions holds likewise.

In contrast to
Theorems~\ref{interpolating} and~\ref{convex}, we do not know whether
the constant in
 Theorem~\ref{main} is best possible.

For an infinite grid like in Figure~\ref{grid}, the number of vertices
per area is half the number of triangles. Thus, in order to get estimates
for the vertex density, just divide the bounds in the theorems by~2.

Our main result,
 Theorem~\ref{main}, will be proved in
Section~\ref{sec:non-interpolating}, after reproving
 Theorem~\ref{interpolating} in
Section~\ref{sec:interpolating}.
For comparison, Section~\ref{sec:convex}
treats the convex case (Theorem~\ref{convex}).
The remainder of this introduction will motivate the problem and
prepare it for the solution.

\subsection{Vertical distance and quadratic functions}
There are two reasons why
 we have chosen to concentrate (i) on quadratic functions and
(ii)~on the vertical distance:
\begin{enumerate}
\item [a)] the relevance from the viewpoint of
applications, and
\item [b)] the mathematical simplicity that comes with this
model and which allows us to derive clean results.
\end{enumerate}
We will discuss these aspects now.

\subsubsection{Applications and related work}
\label{sec:motivation}
Piecewise linear approximation is a fundamental problem in converting
some general function or shape into a form that can be stored and
processed in a computer.  Our original motivation comes from the
desire to approximate the boundaries of three-dimensional configuration
spaces for robot motion planning \citep*{A07-a-phd,A07-agr-pgcss-13},
which turn out to be ruled surfaces with negative Gauss curvature.

Of course, when approximating a surface in space, one does not want to
use the vertical distance but rather something like the Hausdorff
distance, which
measures the distance from the given surface to the \emph{nearest} point
of the approximating surface, in a direction \emph{perpendicular} to
one of the 
surfaces. However, if we consider a small patch of the surface and we
look for a good approximation in a local neighborhood, we can rotate the
surface in 3-space such that it becomes horizontal. Then, as long as the
surface does not curve to much away from the horizontal direction, the
vertical distance is a good substitute for the Hausdorff distance, and
it is always an upper bound on it. 

For 
 piecewise linear approximation,
the first interesting terms of the Taylor approximation are the
quadratic terms.
Thus, quadratic
functions are the model of choice for investigating the question of best
approximation.

Every smooth function can be approximated by a quadratic function in
some neighborhood, and the same is true for surfaces. 
In this sense, our results are applicable as a \emph{local} model, for
a smooth surface or a smooth function as the approximation gets more and more refined.
This approach has been pioneered in the above-mentioned paper of
\citet{A07-pott-00}. Our contribution is to improve the
result for non-interpolating approximation of saddle surfaces.

\citet*{stitching}
have extended this approach to 
an arbitrary bivariate function~$f$, by taking optimal local
approximations on suitably defined patches and ``stitching'' them
together at the patch boundaries. (The setting of this paper is actually
somewhat different: the bivariate function $f$ is given as a set of
scattered data points.)

In arbitrary dimensions, the problem of optimal piecewise linear
approximation has been
adressed 
by
\citet{Clarkson}, without deriving explicit constant factors.
For convex functions and convex bodies, there is a vast literature on
optimal piecewise linear approximation in many variations, see for
example the treatment in~\citet{A07-pott-00} and the references given there.

\subsubsection{Mathematical properties; transforming the problem into
  normal form}

One crucial property of a quadratic function is that, from the point
of view of our problem, it ``looks the same'' everywhere. This is made
precise in the following observation.
\begin{lemma}
\label{same}
Let $f$ be a quadratic function
\begin{equation}
  \tag{\ref{eq:f}}
f(x,y)=ax^2+2bxy+cy^2+dx+ey+g  ,
\end{equation}
 and let
$(x_1,y_1),(x_2,y_2)\in \mathbb{R}^2$ be two points. 
Then there is an affine transformation 
 of $\mathbb{R}^3$ that
\begin{enumerate}
\item maps the graph of $f$ to itself,
\item maps the point
$(x_1,y_1,f(x_1,y_1))$ to the point
$(x_2,y_2,f(x_2,y_2))$,
\item maps vertical lines to vertical lines,
\item leaves vertical distances between points on the same vertical
  line unchanged,
\item acts as a translation in the plane when the $z$-coordinate is ignored.
\end{enumerate}
\end{lemma}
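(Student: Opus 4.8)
The plan is to exhibit the required transformation explicitly, as the composition of a horizontal translation with a vertical shear. Set $p=x_2-x_1$ and $q=y_2-y_1$, and consider the map
\[
\Phi\colon (x,y,z)\ \longmapsto\ \bigl(x+p,\ y+q,\ z+L(x,y)\bigr),
\]
where $L$ is an affine function of $(x,y)$ still to be chosen. First I would observe that conditions~(3) and~(4) hold for \emph{any} choice of $L$: a vertical line $\{(x_0,y_0,t)\}$ is sent to the vertical line through $(x_0+p,y_0+q)$, with every point shifted vertically by the \emph{same} constant $L(x_0,y_0)$, so vertical differences along it are preserved. Also $\Phi$ is genuinely affine and invertible, since its linear part is lower triangular with ones on the diagonal.

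The one computation I expect to carry out is the choice of $L$ forcing condition~(1). Expanding $f(x+p,y+q)-f(x,y)$ from \eqref{eq:f}, the purely quadratic part of $f$ contributes its increment under a translation, which is linear in $(x,y)$ because the second-order part is a bilinear form; the linear part of $f$ contributes a constant. Hence $f(x+p,y+q)-f(x,y)$ is an affine function of $(x,y)$, and I take $L$ to be exactly that function. Then $f(x+p,y+q)=f(x,y)+L(x,y)$ is an identity. Reading it directly shows $\Phi$ maps the graph of $f$ into itself; substituting $x\mapsto u-p$, $y\mapsto v-q$ shows every graph point $(u,v,f(u,v))$ has its $\Phi$-preimage on the graph as well, so $\Phi$ maps the graph \emph{onto} itself, giving~(1).

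Finally, condition~(2) comes for free: $\Phi$ sends $(x_1,y_1,f(x_1,y_1))$ to a point whose horizontal coordinates are $(x_1+p,\,y_1+q)=(x_2,y_2)$, and since that image point lies on the graph by~(1), its vertical coordinate is necessarily $f(x_2,y_2)$. I do not anticipate any real obstacle: the only nontrivial point is the cancellation of the quadratic part in $f(x+p,y+q)-f(x,y)$, and this is automatic for quadratic $f$. (If one wishes to state $\Phi$ in closed form, its linear coefficients are $2ap+2bq$ and $2bp+2cq$ and its constant is $ap^2+2bpq+cq^2+dp+eq$, but the proof does not need these explicitly.)
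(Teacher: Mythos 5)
Your proposal is correct and is essentially the paper's proof: the authors construct exactly the same map, a horizontal translation composed with a vertical shear $z\mapsto z+ux+vy+w$, and your closed-form coefficients $u=2ap+2bq$, $v=2bp+2cq$, $w=ap^2+2bpq+cq^2+dp+eq$ agree with theirs. Your derivation of $L$ as the increment $f(x+p,y+q)-f(x,y)$ is just a slightly more conceptual way of arriving at the same coefficients.
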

\begin{proof}
We construct a transformation of the form
  \begin{equation*}
    \begin{pmatrix}
      x\\y\\z
    \end{pmatrix}
\mapsto
    \begin{pmatrix}
      x'\\y'\\z'
    \end{pmatrix}
=    \begin{pmatrix}
      1&0&0\\
      0&1&0\\
      u&v&1
    \end{pmatrix}
    \begin{pmatrix}
      x\\y\\z
    \end{pmatrix}
+
    \begin{pmatrix}
      x_2-x_1\\y_2-y_1\\w
    \end{pmatrix}
  \end{equation*}
for some parameters $u,v,w$ that are to be determined.
It is evident that
the points
 $(x_0,y_0,z)$
on a vertical line, for fixed $x_0,y_0$,
 are mapped to points
 $(\bar x,\bar y,\bar z+ z
)$, for some fixed $\bar x,\bar y,\bar z$, and thus,
 Properties 3 and 4 are fulfilled.
Moreover, when restricted to the first two coordinates, the
transformation acts as a translation on the $xy$-plane (Property 5), moving
$(x_1,y_1)$ to $(x_2,y_2)$. Thus, Property 2 holds provided that we
can show Property~1.
Property~1
requires that $f(x,y)=z$ implies $f(x',y')=z'$.
This is fulfilled by setting
$u=
2a
(x_2-x_1)
+2b
(y_2-y_1)
$,
$v=
2b
(x_2-x_1)
+2c
(y_2-y_1)
$,
and $w=
f(x_2-x_1,y_2-y_1)-g
$, as is easily checked by calculation.
\qed
\end{proof}

The problem of finding a best approximation remains also unchanged
when adding a linear function to $f$. This
means that we can assume that $d=e=g=0$ in~\eqref{eq:f}.
By a principal axis transform, the $xy$-plane can be rotated
such that $f$ gets the form
$f(x,y)=a'x^2+ c'y^2$, with $a'c'=ac-b^2$. Finally, we scale the $x$-and $y$-axis by
$\sqrt {|a'|}$ and $\sqrt{| c'|}$ such that $f$ becomes
\begin{equation*}
  f(x,y)=\pm x^2 \pm y^2  .
\end{equation*}
The area is changed by the factor
$\sqrt{|a'c'|}=\sqrt{|ac-b^2|}$,
 and this is taken into account by the corresponding
factor in Theorems~\ref{main}--\ref{convex}.

\subsection{Covering the plane with copies of a triangle}

Now we show that a triangle where all three vertices
have the same fixed offset~$\Delta$ from the surface~$f$ can be used to construct a global
approximation.
The offset
$\Delta$ can be positive or negative.
The case $\Delta=0$ corresponds to interpolating approximation.

\begin{lemma}\label{area}
Let $T=p_1p_2p_3$ be a triangle in the plane with area $A$, and let
$\hat f$ be a linear function such that
$\hat f(p_i)=f(p_i)+\Delta$ for the three vertices $p_i$ of $T$.
Let $\eps$ denote the maximal vertical distance within the triangle\textup:
$\eps := \max\{\,|\hat f(x,y)-f(x,y)| : (x,y)\in T\,\}$.

Then there is a piecewise linear approximation of $f$ over the whole plane with vertical
distance $\eps$ and triangle density $1/A$.
\end{lemma}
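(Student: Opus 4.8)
The plan is to exploit the homogeneity of $f$ established in Lemma~\ref{same} in order to tile the plane with translated copies of a single triangular ``patch'' and show that the vertical error does not increase when we pass from one copy to the next. The starting point is the observation that, since $f$ is quadratic, Lemma~\ref{same} provides, for any two points $p,q\in\mathbb{R}^2$, an affine map $\Phi_{p\to q}$ of $\mathbb{R}^3$ that fixes the graph of $f$, sends the vertical line over $p$ to the vertical line over $q$, and preserves vertical distances. Applying such a map to the data of the lemma -- the triangle $T$ together with the linear function $\hat f$ with $\hat f(p_i)=f(p_i)+\Delta z$ -- produces a new triangle $\Phi(T)$ (still with area $A$, since the linear part of $\Phi$ has determinant $1$) and a new \emph{linear} function $\hat f\circ\Phi^{-1}$ whose graph is the image of the graph of $\hat f$; because $\Phi$ maps the graph of $f$ to itself and preserves vertical offsets, the new linear function again lies at signed offset $\Delta z$ above $f$ at the three vertices of $\Phi(T)$, and the maximal vertical distance over $\Phi(T)$ is still exactly $\eps$.

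Next I would build the global triangulation. Pick any lattice $\Lambda\subset\mathbb{R}^2$ of covolume $2A$ such that two lattice-translated copies of $T$ tile a fundamental domain -- concretely, if $T=p_1p_2p_3$, take $\Lambda$ generated by $p_2-p_1$ and $p_3-p_1$, so that the plane is partitioned (up to boundaries) into copies $T+\lambda$ and $T'+\lambda$ for $\lambda\in\Lambda$, where $T'$ is the point-reflected companion triangle. For each such copy, obtained from $T$ by a translation $t$ of the $xy$-plane, Lemma~\ref{same} (with $(x_1,y_1)=p_1$, $(x_2,y_2)=p_1+t$) gives a vertical-distance-preserving affine map of $\mathbb{R}^3$ whose $xy$-part is exactly that translation; transporting $\hat f$ through this map defines the value of the global piecewise linear approximation on that triangle. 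One must check that these local linear pieces agree along shared edges: along an edge shared by two copies, the two candidate linear functions agree at the two endpoint vertices (each equals $f+\Delta z$ there, by the offset-preserving property), and a linear function on a segment is determined by its values at two points, so they coincide on the whole edge. Hence the pieces glue into a well-defined continuous piecewise linear $\hat f$ on all of $\mathbb{R}^2$, with one vertex at each image of the $p_i$ and triangle density $1/A$ (two triangles per fundamental domain of area $2A$).

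Finally, the error bound is immediate from the construction: over each copy of $T$ (or of $T'$, handled symmetrically by first reflecting), the restriction of the global $\hat f$ is, by construction, the transported linear function, whose vertical distance to $f$ over that copy is $\eps$ by the transport argument of the first paragraph; taking the supremum over all copies, which cover the plane, gives $\max_{x,y}|f(x,y)-\hat f(x,y)|=\eps$.

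The main obstacle I anticipate is not the error estimate but the bookkeeping of the companion triangle $T'$ and the edge-compatibility check: one has to verify that the point reflection relating $T$ and $T'$ is itself realized, after composing with a suitable vertical shear, by an affine map of $\mathbb{R}^3$ fixing the graph of $f$ and preserving vertical distances, so that the transported linear function over $T'$ also sits at offset $\Delta z$ and meets its neighbors consistently. This requires either extending Lemma~\ref{same} slightly to allow a $180^\circ$ rotation of the $xy$-plane (which maps $f(x,y)=\pm x^2\pm y^2$ to itself in normal form, hence a general quadratic's graph to itself up to a vertical shear and translation), or equivalently arguing directly in normal form, where the translation structure is transparent. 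Once that symmetry is in hand, everything else is routine gluing.
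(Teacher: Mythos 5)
Your proposal is correct and follows essentially the same route as the paper: tile the plane with translates of $T$ and its $180^\circ$-rotated companion $T'$, and use Lemma~\ref{same} to transport the linear piece (with the same vertex offset $\Delta z$) to each copy so that the error bound $\eps$ is preserved. You simply make explicit two points the paper treats as evident, namely the agreement of neighboring linear pieces along shared edges and the fact that the point reflection also preserves the setup.
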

\begin{proof}
  If we rotate the triangle $T$ by $180^\circ$ about the origin,
we can define a linear function 
over
 the rotated triangle $T'$ with the same maximum error~$\eps$.
  Translates of $T$ and $T'$ can be used to tile the plane as in
Figure~\ref{grid}.
By Lemma~\ref{same}, defining a linear function over any translate of
$T$ or $T'$ with the same vertex offset $\Delta$ leads to an error
of $\eps$ over this triangle. Since all offsets are equal, the
triangles fit together to form a piecewise linear interpolation over
the whole plane. The triangle density is $1/A$.
\qed
\end{proof}

If we impose the condition that all vertices have the same vertical
offset from the surface $f$, 
this lemma turns the problem of finding
an 
 approximating triangulation with few triangles into the problem of finding a
largest-area triangle $T$ subject to the error bound.

\section{Convex surfaces}
\label{sec:convex}
After these preparations, it is easy to solve the convex case 
$f(x,y)=x^2+y^2$.
Consider first the case of interpolating approximation.
 The largest error over a triangle~$T$ is assumed at the
center of the smallest enclosing circle~$C$, see Figure~\ref{circle}:
\begin{figure}[b]
  \centering
  \includegraphics{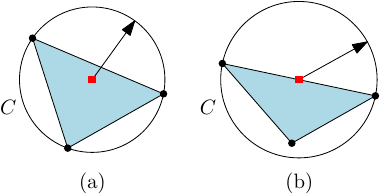}
  \caption{The smallest enclosing circle $C$ for (a) an acute triangle, (b) an obtuse triangle}
  \label{circle}
\end{figure}
This can be seen easily 
when the center of $C$ is at the origin, which we can assume
by Lemma~\ref{same}.
Let $r$ be the radius of $C$.
 Then the line or plane through the points of
maximum distance from the origin lies at height $z=r^2$, giving rise to an error of $\eps=r^2$. The
search for an optimal triangle thus amounts to finding a largest-area
triangle inside a circle of radius~$\sqrt{\eps}$. This is clearly an
equilateral inscribed triangle, and its area is $\eps\frac{3\sqrt3}4$.
By Lemma~\ref{area}, this leads to the second part of Theorem~\ref{convex}.

It is obvious that the optimal triangle is not unique: it can be
rotated freely, giving rise to a one-parameter family of optimal
triangulations.

 The non-interpolating case is now easily derived from
the interpolating case, because for a convex function,
an interpolating approximation
 $\hat f$ cannot
lie below $f$. Thus, finding an interpolating approximation amounts to
looking
for a function $\hat f$ that satisfies
\begin{equation}
\label{inter}
  f(x,y) \le \hat f(x,y) \le f(x,y)+\eps
\end{equation}
for all $x,y$,
see Figure~\ref{conv-approx}.
 To see that the problems are indeed equivalent,
observe that any function $\hat f$ fulfilling \eqref{inter} can be
turned into an interpolating approximation by reducing the values
$\hat f(x,y)$ at each vertex $(x,y)$ of the triangulation to its lower
bound, namely $f(x,y)$, without violating~\eqref{inter}.
On the other hand, non-interpolating approximation looks for a
function
that satisfies
\begin{equation*}
  f(x,y) -\eps \le \hat f(x,y) \le f(x,y)+\eps.
\end{equation*}
A  non-interpolating approximation with error $\eps$ can thus be
obtained from an interpolating approximation $\hat f$ with error $2\eps$ by
subtracting $\eps$ from $\hat f$, and vice versa.

\begin{figure}[htb]
  \centering
  \includegraphics{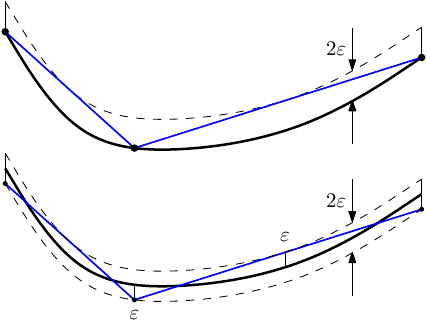}
  \caption{A non-interpolating approximation corresponds to
an interpolating approximation with a doubled
    error bound.}
  \label{conv-approx}
\end{figure}

\section{Saddle surfaces}
\label{sec:negative}

For the indefinite case, it is more convenient to rotate the
coordinate system by $45^\circ$ and consider the function in the form
$$f(x,y)=2xy.$$
\begin{lemma}
  The maximum vertical error between $f(x,y)$ and a linear function
$\hat f(x,y)=ux+vy+w$
over a triangular region $T$ is never attained in the interior of $T$.
\end{lemma}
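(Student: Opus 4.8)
The plan is to show that the error function $g(x,y) := \hat f(x,y) - f(x,y) = ux + vy + w - 2xy$ has no interior local extremum on $T$, from which it follows that $\max_{T}|g|$ is attained on the boundary $\partial T$. First I would compute the gradient $\nabla g = (u - 2y,\ v - 2x)$ and the Hessian, observing that the Hessian of $g$ is the constant matrix $\begin{pmatrix} 0 & -2 \\ -2 & 0 \end{pmatrix}$, which has eigenvalues $+2$ and $-2$ and hence is indefinite. So any critical point of $g$ is a saddle point, not a local maximum or minimum. The (unique) critical point is $(x,y) = (v/2,\ u/2)$, and near it $g$ takes both larger and smaller values than $g$ at that point; therefore $g$ attains neither its maximum nor its minimum over $T$ at an interior critical point.

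The remaining thing to rule out is that $\max_T |g|$ or equivalently the max/min of $g$ is attained at an interior point that is not a critical point — but that is impossible for a differentiable function: an interior extremum of a differentiable function must be a critical point. So the extrema of $g$ over the compact set $T$ lie on $\partial T$, and since $|g|$ is continuous its maximum over $T$ is $\max(\max_{\partial T} g,\ -\min_{\partial T} g) = \max_{\partial T}|g|$, which is attained on $\partial T$, hence not in the interior. (If the critical point happens to lie outside $T$, the conclusion is even more immediate, since then $g$ has no interior critical point at all.)

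The only subtlety — and the single step that deserves a sentence of care rather than a one-liner — is the edge case where the critical point lies \emph{on} the boundary $\partial T$, or where $g$ is constant along an edge; in the first situation the statement ``not attained in the interior'' is still literally true, and in the second the maximum is attained on that edge (a boundary set) as well as possibly elsewhere, so again there is a boundary maximizer. Thus in all cases a maximizer of $|g|$ can be found on $\partial T$, which is what the lemma asserts. This is essentially the two-dimensional maximum principle for the indefinite quadratic $2xy$, specialized to a triangle, and no genuine obstacle arises.
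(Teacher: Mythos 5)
Your proposal is correct and follows essentially the same route as the paper: both arguments reduce the question to showing that the signed error $\pm(\hat f - f)$ has no interior local maximum, because its Hessian $\bigl(\begin{smallmatrix}0&\mp2\\\mp2&0\end{smallmatrix}\bigr)$ is indefinite and so the second-order necessary condition fails at any interior critical point. The extra care you take about non-critical interior points, the critical point lying on $\partial T$, and constancy along an edge is harmless but not needed beyond what the paper's one-line argument already covers.
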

\begin{proof}
A local maximum of
the error 
$|f(x,y)-f(x,y)|=
\max \{ f(x,y)-\hat f(x,y),\,\hat f(x,y)-f(x,y)\}$
 must 
be a local
maximum of 
the 
 function
$f(x,y)-\hat f(x,y)$ or of $\hat f(x,y)-f(x,y)$.
However, these functions
are saddle functions and they
cannot have a local
  extremum in the interior of $T$.
More formally, they
have,
respectively, the Hessians
$\left(
\begin{smallmatrix}
  0&2\\2&0
\end{smallmatrix}\right)
$
and
$\left(
\begin{smallmatrix}
  0&-2\\-2&0
\end{smallmatrix}\right)
$, which are not negative semi\-definite, and thus the second-order necessary condition for
a local maximum is not fulfilled.
\qed
\end{proof}
We conclude that 
it suffices to measure the approximation error on the edges and vertices
of~$T$.

\subsection{Interpolating approximation}
\label{sec:interpolating}

We will first treat the interpolating case and recover the results of
\citet{A07-pott-00} (our Theorem~\ref{interpolating}) as a
preparation for the free approximation (Theorem~\ref{main}) in
Section~\ref{sec:non-interpolating}. The error along a chord connecting
two points of the surface can be evaluated very easily.
\begin{lemma}
\textup%
{\citep[Lemma~2]{A07-pott-00}}
\label{chord}
  Let $p,q\in \mathbf{R}^2$ be two points. The maximum
vertical error between~$f$ and the linear interpolation between $f(p)$
and $f(q)$ is attained at the midpoint $(p+q)/2$ and its value is
\begin{equation}\label{edge}
\max_{0\le\lambda\le 1}
|
(1-\lambda) f(p) +
\lambda f(q) 
-
f((1-\lambda)p +\lambda q) | = \frac{|f(q-p)|}4.
\end{equation}
\end{lemma}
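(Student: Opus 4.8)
The plan is to restrict the whole configuration to the line through $p$ and $q$, parametrised by $\lambda$, and to observe that the signed error is then a univariate \emph{quadratic} polynomial in $\lambda$ whose shape is completely determined by two interpolation conditions and one leading coefficient.

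First I would introduce $\phi(\lambda):=f\bigl((1-\lambda)p+\lambda q\bigr)$ and $\ell(\lambda):=(1-\lambda)f(p)+\lambda f(q)$, so that the left-hand side of \eqref{edge} is $\max_{0\le\lambda\le1}|\ell(\lambda)-\phi(\lambda)|$. Writing the chord point as $p+\lambda(q-p)$ and expanding $f(x,y)=2xy$, the function $\phi$ is a quadratic polynomial in $\lambda$ whose coefficient of $\lambda^2$ is $2(q_1-p_1)(q_2-p_2)=f(q-p)$; the function $\ell$ is affine in $\lambda$, hence its $\lambda^2$-coefficient is $0$. Consequently $\ell-\phi$ is a quadratic polynomial with leading coefficient $-f(q-p)$ that vanishes at $\lambda=0$ and $\lambda=1$ (there $\hat f$ interpolates $f$ by hypothesis). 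A quadratic with these three properties is forced, giving
\[
  (1-\lambda)f(p)+\lambda f(q)-f\bigl((1-\lambda)p+\lambda q\bigr)
  = \ell(\lambda)-\phi(\lambda) = f(q-p)\,\lambda(1-\lambda).
\]

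The remaining step is elementary: on $[0,1]$ the parabola $\lambda(1-\lambda)$ is nonnegative and attains its maximum value $\tfrac14$ uniquely at $\lambda=\tfrac12$. Hence $|\ell(\lambda)-\phi(\lambda)|=|f(q-p)|\,\lambda(1-\lambda)$ is maximised at the midpoint $(p+q)/2$, with value $|f(q-p)|/4$, which is exactly \eqref{edge}.

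I do not expect a real obstacle here; the only point that needs a moment of care is the identification of the $\lambda^2$-coefficient of $\phi$ with $f(q-p)$. One can either carry out the two-line expansion of $2\bigl((1-\lambda)p_1+\lambda q_1\bigr)\bigl((1-\lambda)p_2+\lambda q_2\bigr)$ directly, or remark once and for all that for a quadratic function the second-order Taylor behaviour along any segment is governed by its homogeneous quadratic part, which is translation-invariant and hence — in the chosen normal form, where $f$ has no linear or constant term — equals $f$ itself evaluated on the difference vector $q-p$. Either way the computation is routine.
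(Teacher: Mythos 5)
Your proof is correct and is essentially the computation the paper makes: the signed error along the chord is the quadratic $f(q-p)\,\lambda(1-\lambda)$, maximised at $\lambda=\tfrac12$. The only cosmetic difference is that the paper first translates $p$ to the origin via Lemma~\ref{same} so that the restriction of $f$ to the segment reads $\lambda^2 f(q)$, whereas you identify the leading coefficient directly from the translation-invariance of the homogeneous quadratic part; both routes are the same argument.
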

\begin{proof}
  By Lemma~\ref{same}, we may translate the plane such that $p$ becomes the origin.
Then the function $f$ along the segment $pq$ is simply the quadratic function
$f((1-\lambda)p +\lambda q) = \lambda^2 f(q)$, for which the
statement is easy to establish, see Figure~\ref{fig:quadratic}.
\qed
\end{proof}
\begin{figure}[htb]
  \centering
\includegraphics{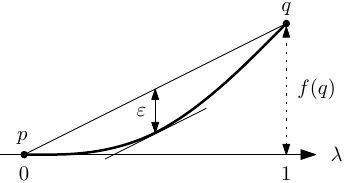}
  \caption{Approximation of a quadratic function by a chord}
  \label{fig:quadratic}
\end{figure}

In the convex case $f(x,y)=x^2+y^2$ of Section~\ref{sec:convex}, it was clear that there is a
one-parametric solution space, since the function is rotationally symmetric.
In the saddle case, it comes somewhat as a surprise that the optimal
triangulations have a similar variability.
 However, this is explained by
the \emph{pseudo-Euclidean transformations}, which have the form
\begin{equation*}
  (x,y) \mapsto (mx,y/m),
\end{equation*}
for a parameter $m\ne 0$, and which leave the graph of the function $f$
invariant, see~\citet{A07-pott-00}.
They scale the $x$- and $y$-coordinates, but they preserve the area.
We will make use of the freedom to apply
{pseudo-Euclidean} transformations
to simplify the calculations.
  
Let us now look for the largest-area triangle $p_1p_2p_3$ such that
the maximum error on each edge $p_1p_2$, $p_2p_3$, and $p_1p_3$, as
computed by \eqref{edge}, is bounded by $\eps$. In more explicit terms, this
means
that 
\begin{equation}
  \label{eq:hyper}
 |f(p_i-p_j)|=|2(x_i-x_j)(y_i-y_j)|\le 4\eps
. 
\end{equation}
We shall show that these constraints must hold as
equalities, because of the concave nature of the constraints.
\begin{lemma}\label{tight}
In a triangle of maximum area subject to the
constraints~\eqref{eq:hyper},
each triangle edge must fulfill this constraint as an
equality.
\end{lemma}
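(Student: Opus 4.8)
The plan is to argue by contradiction: suppose we have an area-maximal triangle $T=p_1p_2p_3$ satisfying the three constraints $|2(x_i-x_j)(y_i-y_j)|\le 4\eps$, but one of them, say the edge $p_1p_2$, is strict. I would then show we can perturb one vertex to strictly increase the area while keeping all three constraints satisfied, contradicting maximality.

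First I would fix the edge $p_1p_3$ (whose constraint may or may not be tight) and think of moving $p_2$ alone. The area of $T$ is a linear (affine) function of the coordinates of $p_2$, so its gradient with respect to $p_2$ is a nonzero constant vector (nonzero because $p_1\ne p_3$); thus there is a whole half-plane of directions in which moving $p_2$ increases the area. The constraint coming from edge $p_1p_2$ is strict by assumption, so it remains satisfied for all sufficiently small moves of $p_2$ in any direction. The only binding obstruction is the constraint from edge $p_2p_3$, namely $g(p_2):=|2(x_2-x_3)(y_2-y_3)|\le 4\eps$. If this constraint is also strict, every small perturbation is feasible and we are immediately done; so assume it is an equality, i.e. $p_2$ lies on one branch of the hyperbola $2(x-x_3)(y-y_3)=\pm4\eps$ centered at $p_3$.

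The key geometric point is then that the feasible region near $p_2$ is (locally) the convex side of this hyperbola branch — because $2(x-x_3)(y-y_3)$ restricted to a neighborhood of a point with, say, $(x-x_3)(y-y_3)=2\eps>0$ has the sublevel set $\{(x-x_3)(y-y_3)\le 2\eps\}$, which is convex (it is bounded by a hyperbola, and this connected component is a convex set). So the feasible directions for $p_2$ form a half-plane-like cone determined by the tangent line to the hyperbola at $p_2$, pointing to the convex side. To get a contradiction I need the area-increasing half-plane of directions and this feasible cone to have a common interior direction. They fail to overlap only if the area gradient points directly \emph{out} of the feasible side along the tangent line — i.e. only if moving along the tangent to the hyperbola is exactly the area-preserving direction and the outward normal is the area-increasing direction. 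I would rule this out by observing that in that degenerate case the tangent line to the hyperbola at $p_2$ would be parallel to $p_1p_3$ (area-preserving moves of $p_2$ are exactly translations parallel to the opposite edge), and the area gradient would point to the \emph{far} side of that tangent line from the convex region — but one can check, using that $p_1$ and $p_3$ themselves satisfy their constraints and the convexity of the sublevel set, that $p_1$ (hence the whole segment $p_1p_3$, hence the interior of $T$) lies on the convex/feasible side, which forces the area gradient to point into the feasible cone, not out of it. Hence a feasible area-increasing perturbation exists.

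The main obstacle is this last step: handling the borderline configuration where the area gradient and the hyperbola tangent at $p_2$ are aligned so that first-order analysis is inconclusive. I expect to resolve it either by the convexity argument sketched above (the interior of the triangle lies strictly on the feasible side of each hyperbola, so the "outward" area direction cannot be the feasible-blocking one) or, if cleaner, by a direct second-order/curvature argument: move $p_2$ along the hyperbola itself (staying feasible for the $p_2p_3$ constraint with equality, and feasible for $p_1p_2$ by strictness) and show the area strictly increases in one of the two directions, using that the hyperbola is strictly convex so it cannot be tangent to a level line of the area from the wrong side. Either way the contradiction shows every edge constraint is tight at the optimum.
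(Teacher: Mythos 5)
Your overall strategy (perturb one vertex, use linearity of the area, and analyze the tangent line to the constraint hyperbola at the moved vertex) is the same as the paper's, but your key geometric claim is wrong, and the error propagates into the one case that matters. You assert that the sublevel set $\{(x,y): (x-x_3)(y-y_3)\le 2\eps\}$ is convex. It is not: the function $(x,y)\mapsto (x-x_3)(y-y_3)$ has indefinite Hessian, and it is the \emph{superlevel} set $\{(x-x_3)(y-y_3)\ge 2\eps\}$ (in the relevant quadrant) that is convex, being the region above the graph of the convex function $y=y_3+2\eps/(x-x_3)$. Consequently the feasible region $\{|(x-x_3)(y-y_3)|\le 2\eps\}$ is the \emph{complement} of four convex pieces; at a boundary point it is locally concave, so the tangent line to the hyperbola at $p_2$ lies (except for $p_2$ itself) in the \emph{interior} of the feasible region. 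The feasible directions therefore do not form a half-plane cone at all: both tangent directions are feasible and lead strictly inside. This kills the "borderline configuration'' you spend the last paragraph worrying about --- it simply does not arise --- and it also invalidates the argument you give for it: you claim the triangle interior lies on the feasible side of the tangent and that this forces the area gradient to point into the feasible cone, but the area gradient at $p_2$ points \emph{away} from the line $p_1p_3$, i.e.\ away from the triangle, so your reasoning would, if anything, run the other way. The second-order fallback is likewise left unproved and hinges on exactly the curvature sign you have reversed.

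With the orientation corrected, the proof becomes the paper's: slide $p_2$ along the tangent segment in the direction that does not decrease the (linear) area; after a small such move both constraints involving $p_2$ are strict, and then a further free perturbation strictly increases the area, contradicting maximality. So the gap is genuine as written, but it is a sign error rather than a structural flaw, and fixing it makes your argument shorter, not longer.
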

\begin{proof}
Let us consider $p_1$ and $p_2$ as fixed and maximize the area by
varying $p_3$ subject to the constraints~\eqref{eq:hyper}.
We will show that both constraints that involve $p_3$ must be tight.
The lemma then follows by applying the same argument to $p_1$ instead of $p_3$.

The area is a linear function of $p_3$, proportional to the
distance of $p_3$ from the line $p_1p_2$. If none of the two
constraints involving $p_3$ is tight, we can freely move $p_3$ in some
small neighborhood, and hence this situation cannot be optimal.

Suppose now that the constraint for one edge incident to $p_3$, say, $p_1p_3$,
 is tight but the
other one, $p_2p_3$, is fulfilled as a strict inequality.
 The constraint~\eqref{eq:hyper} for $p_1p_3$ confines $p_3$ within a region $R$
bounded by four hyperbola branches centered at $p_1$, shown in Figure~\ref{fig:boundary}a.
\begin{figure}[htb]
  \centering
\noindent
(a)
  \begin{tikzpicture}[scale=0.9]
  \tikzset{plotLabel/.style={%
      postaction={ decorate,transform shape,
        decoration={markings, mark=at position .5 with {\node #1;}}
      }
    }
  }
  \coordinate (p1a) at (0.8-0.5,1/0.8+0.5/0.64);
  \coordinate (p1b) at (0.8+0.45,1/0.8-0.45/0.64);
  \draw [dotted,thick] (p1a) -- (p1b);
\node at (p1b) [below] {$s$};

  \def\bndmax{2.55}
  \def\bndmin{0.39}
  \def\samples{101}
  \def\ptRadius{2}

  \draw[thick,gray,->] (-2.55,0) -- (2.55,0);
  \draw[thick,gray,->] (0,-2.55) -- (0,2.55);
  \begin{scope}[domain=-\bndmax:-\bndmin]
    \tikzset{func/.style={%
        thick,samples=\samples}%
    }%
    \tikzset{col/.style={color=blue}}%
    \tikzset{cCol/.style={color=blue}}
    \draw[func,col,%
    plotLabel={[below=3pt]{$|x y| = 2 \eps$}}%
    ] plot (\x,{1/\x});
    \draw[func,col] plot (-\x,{-1/\x});
    \draw[func,cCol] plot (\x,{-1/\x});%
    \draw[func,cCol] plot (-\x,{1/\x});%
  \end{scope}

  \coordinate (p0) at (0,0);
  \coordinate (p1) at (0.8,1/0.8);

  \fill (p0) circle (\ptRadius pt) node[below left]{$p_1$};
  \fill (p1) circle (\ptRadius pt) node[left]{$p_3\,$};
\node at (-0.6,1) {$R$};
  \begin{scope}[thick]
    \draw [->,shorten >=\ptRadius pt] (p0) -- (p1);
  \end{scope}
\end{tikzpicture}
\qquad
(b)
  \begin{tikzpicture}[scale=0.9]
  \tikzset{plotLabel/.style={%
      postaction={ decorate,transform shape,
        decoration={markings, mark=at position .5 with {\node #1;}}
      }
    }
  }

  \def\samples{101}
  \def\ptRadius{2}

  \draw[thick,gray,->] (-2.55,0) -- (2.55,0);
  \draw[thick,gray,->] (0,-2.55) -- (0,2.55);

  \def\bndmax{2.55}
  \def\bndmin{0.39/1.2}
  \begin{scope}[domain=-\bndmax:-\bndmin]
    \tikzset{func/.style={%
        thick,samples=\samples}%
    }%
    \tikzset{col/.style={color=blue}}%
    \draw[func,col,%
    plotLabel={[below=8pt]{$x y = 2 \eps{-}2\Delta\!$}}%
    ] plot (\x,{1/1.2/\x});
    \draw[func,col
    ] plot (-\x,{-1/1.2/\x});
  \end{scope}
  \def\bndmax{2.55}
  \def\bndmin{0.39*1.2}
  \begin{scope}[domain=-\bndmax:-\bndmin]
    \tikzset{func/.style={%
        thick,samples=\samples}%
    }%
    \tikzset{cCol/.style={color=magenta}}
    \draw[func,cCol,
    plotLabel={[above=8pt]{$x y = -2 \eps{-}2\Delta$}}%
    ] plot (\x,{-1.2/\x});%
    \draw[func,cCol] plot (-\x,{1.2/\x});%
  \end{scope}

  \fill (p0) circle (\ptRadius pt) node[below left]{$p_1$};
\node at (-0.55,0.8) {$R$};
\end{tikzpicture}

  \caption{The feasible region $R$ is bounded by four hyperbola branches.}
  \label{fig:boundary}
\end{figure}
This region is strictly concave in the following sense:
Through each point $p_3\in R$, there is a line segment $s$ in $R$ that
contains $p_3$ in its interior.
(When $p_3$ lies on the boundary of $R$, as we are assuming, $s$ is a
part of
the tangent to the boundary at this point.)
 Moreover, all points of $s$ other than
$p_3$ lie in the interior of $R$. Now, we can move along $s$ by some
small amount in at
least one
direction without decreasing the area function, such that, in the resulting
point, none of the two constraints involving $p_3$ is tight. But this
was already excluded above.
\qed
\end{proof}

We can classify
 triangle edges $p_ip_j$ 
into \emph{ascending edges}
 (extending in the SW--NE direction)
and \emph{descending edges}
 (extending in the NW--SE direction), according to the sign of
$(x_j-x_i)(y_j-y_i)$. 
 Let us assume without loss of generality that the
predominant category is ascending, and two ascending
 edges are $p_1p_2$ and
$p_1p_3$.
Furthermore,
by Lemma~\ref{same}, we can assume that
$p_1$ is at the origin, and, after a rotation by $180^\circ$ if
necessary, $p_2$ and $p_3$ lie in the first quadrant, see
Figure~\ref{fig:hyperbolas}a.
\begin{figure}[htb]
  \centering

  \def\ptRad{2pt} 
  \def\golden{1.61803} 
  \def\eps{1.0}
  \pgfmathsetmacro{\xii}{2*\golden*sqrt(\eps)}

\newif\iflabels
\labelstrue

  \newcommand*{\drawperturbedtriangle}[2]{
    \path let \p1 = (p1) in coordinate (P1) at (#1*\x1,\y1/#1);
    \path let \p1 = (p2) in coordinate (P2) at (#1*\x1,\y1/#1);
    \draw[thick,fill=#2,fill opacity=0.5]

    (p0) node[opacity=1,black,above left]{\iflabels$p_1$\fi} --
    (P1) node[opacity=1,black,above right]{\iflabels$p_2$\fi} --
    (P2) node[opacity=1,black,above right]{\iflabels$p_3$\fi} -- cycle;

    \foreach \i in {1,2}
    \fill (P\i) circle (\ptRad);
  }%

\noindent
\begin{tikzpicture}
  \draw[help lines,->] (-1.5em,0) -- (5,0);
  \draw[help lines,->] (0,-1em) -- (0, 4.5);

\coordinate (p0) at (0,0);
\coordinate (p1) at ($({\xii},{4*\eps/\xii})$);
\coordinate (p2) at ($({\xii/pow(\golden,2)},{4*\eps*pow(\golden,2)/\xii})$);
 \foreach \i in {0}
 \fill (p\i) circle (\ptRad);

    \drawperturbedtriangle{0.9}{blue!20}

  \draw[thick,blue,domain=0.9:5] plot (\x,{4*\eps/\x});
 \node at (2.5,-0.5) {(a)};
\end{tikzpicture}
\hfil
\labelsfalse
\begin{tikzpicture}
  \draw[help lines,->] (-1.5em,0) -- (5,0);
  \draw[help lines,->] (0,-1em) -- (0, 4.5);

\coordinate (p0) at (0,0);
\coordinate (p1) at ($({\xii},{4*\eps/\xii})$);
\coordinate (p2) at ($({\xii/pow(\golden,2)},{4*\eps*pow(\golden,2)/\xii})$);
 \foreach \i in {0}
 \fill (p\i) circle (\ptRad);

    \drawperturbedtriangle{1}{blue!20}
    \drawperturbedtriangle{1.15}{yellow!20}
    \drawperturbedtriangle{1.32}{yellow!40}
 \node at (2.5,-0.5) {(b)};
  \draw[thick,blue,domain=0.9:5] plot (\x,{4*\eps/\x});
\end{tikzpicture}

  \caption{(a) The potential positions of $p_2$ and $p_3$.
(b)~Transforming the triangle by pseudo-Euclidean motions.}
  \label{fig:hyperbolas}
\end{figure}
 The two points $p_2=(x_2,y_2)$ and $p_3=(x_3,y_3)$ lie on 
the hyperbola $xy=2\eps$. By applying a pseudo-Euclidean
transformation, it suffices to consider the case that they lie symmetric with respect to
the line $x=y$, i.e., 
$(x_3,y_3)=(y_2,x_2)$.
We now substitute this
and the equation $y_2=2\eps/x_2$
 into
the relation~\eqref{eq:hyper} for
$|f(p_3-p_2)|$ and obtain the equation
$|f(p_3-p_2)|
=|2(x_3-x_2)(y_3-y_2)|
=|2(y_2-x_2)(x_2-y_2)|
=2(y_2-x_2)^2
=2(2\eps/x_2-x_2)^2
=4\eps
$. Solving for $x_2$ gives 
$p_2=(
\sqrt{\eps/2}(\sqrt5+1),
\sqrt{\eps/2}(\sqrt5-1))$.
(The quartic equation for $x_2$ has four solutions in total. There is another nonnegative
solution, which just swaps the two coordinates $x_2$ and $y_2 $, or
equivalently, swaps $p_2$ with $p_3$. The other two solutions are just
the negations of the first two.) The area of the triangle
$p_1p_2p_3$ is
\begin{equation}
  \label{eq:triangle-area}
\frac 12
\left|
  \begin{matrix}
    x_2&x_3\\
    y_2&y_3\\
  \end{matrix}
\right| =
\frac 12
\left|
  \begin{matrix}
    x_2&y_2\\
    y_2&x_2\\
  \end{matrix}
\right| =
\tfrac12(x_2^2-y_2^2) =
\tfrac12(x_2+y_2)(x_2-y_2)
 =
 \eps\sqrt5.
\end{equation}
By Lemma~\ref{area}, this establishes Theorem~\ref{interpolating}.
\qed
\medskip

The one-parameter family of triangulations that are optimal is
obtained by applying pseudo-Euclidean transformations to the symmetric
solution $p_1p_2p_3$ computed above, see
Figure~\ref{fig:hyperbolas}b: The set of optimal triangles
consists of all
triangles $p_1p_2p_3$ with $p_1=(0,0)$,
$p_2=(
\sqrt{\eps/2}(\sqrt5+1)\cdot m,
\sqrt{\eps/2}(\sqrt5-1)/m)$,
and
$p_3=(
\sqrt{\eps/2}(\sqrt5-1)\cdot m,
\sqrt{\eps/2}(\sqrt5+1)/m)$,
for $m\ne 0$, as well as their reflections in the coordinate axes and
their translations.

We can use this freedom to choose a triangulation
which optimizes some secondary criterion, like the shape of the triangles.
\citet[Chapter~3]{A07-a-phd} considered the problem of
maximizing the smallest angle. He showed that the optimal triangle is,
not surprisingly, always an isosceles triangle. In general, there are two different
shapes of isosceles triangles, corresponding to the two patterns in
Figure~\ref{fig:lifted}a--b. For a general quadratic function,
these two cases have differently shaped triangles, and the best choice
depends on the ratio between the eigenvalues of the quadratic form
associated to~$f$.

 The family of optimal triangulations that are characterized above
 is somewhat counter-intuitive: The surface
described by $z=2xy$ is a \emph{ruled} surface: it is swept out by a
line.  Any edge between two points on a line of the ruling has error
0, no matter how long it is. It seems attractive to use edges
that go along the ruling.  The above results show that this is not the
best idea: it is better to ``distribute'' the error evenly to the three
edges. If one wants to insist on following the ruling, one can impose that $p_2$ lies on
the $x$-axis. The optimal triangle is then an isosceles
triangle $p_1p_2p_3$ with a base
$p_1p_2$ of arbitrary length
and $p_3$ on the hyperbola $|xy|=2\eps$,
and its area is~$2\eps$ instead of~$\eps\sqrt5$.

\begin{figure}[htb]
  \centering
  \begin{tabular}{cc}
\rlap{\includegraphics[width=4.8cm]
{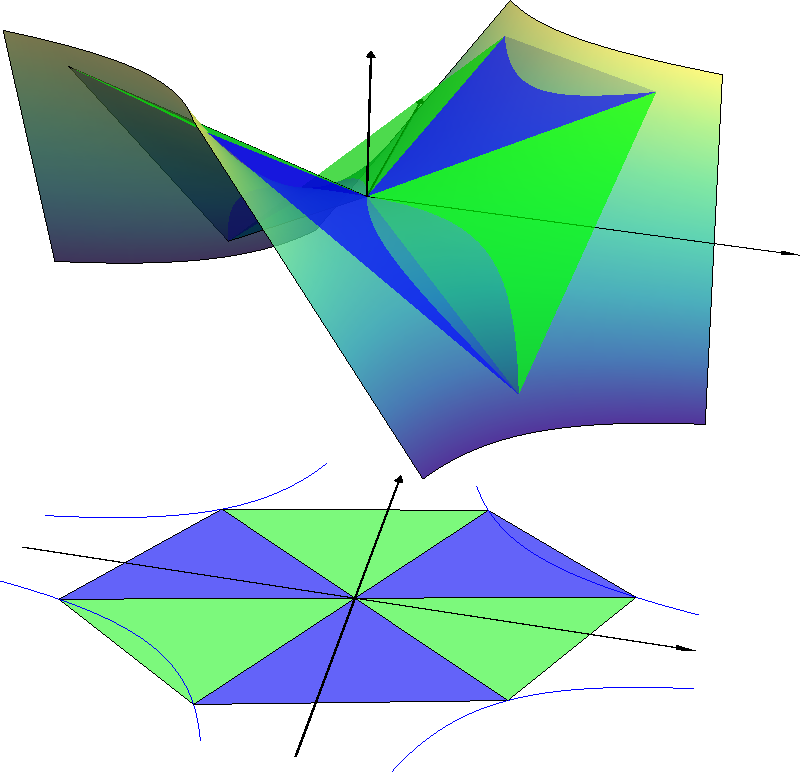}}%
\begin{picture}(155,0)
  \put(127.5,18){$x$}
  \put(145,89){$x$}
  \put(62.5,132){$z$}
  \put(64.5,51.5){$y$}
\end{picture}
&
\rlap{\includegraphics[width=4.8cm]
{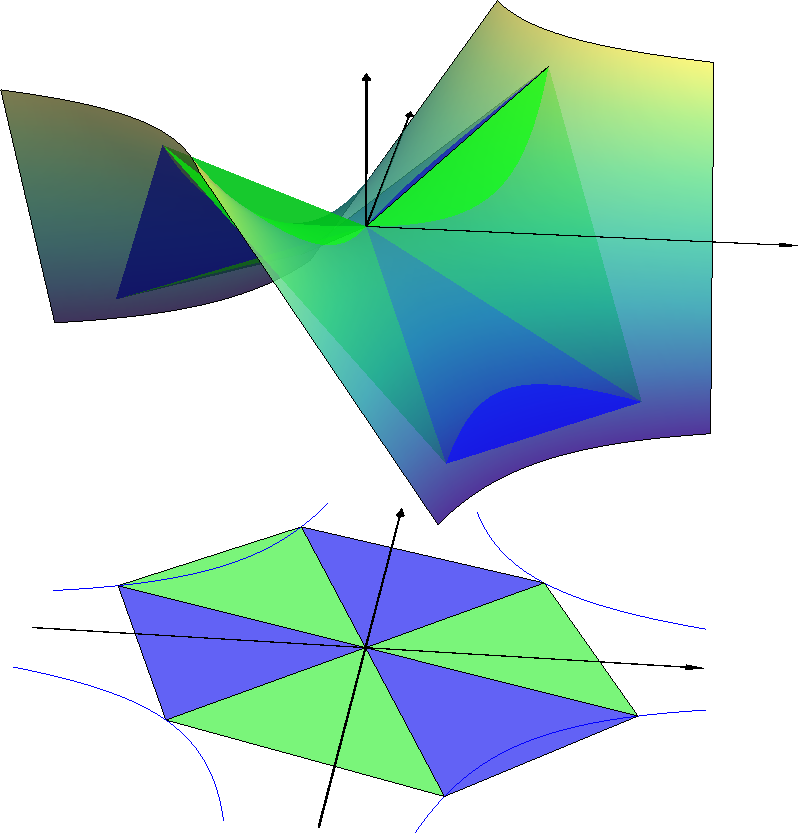}}%
\begin{picture}(155,0)
  \put(128,27){$x$}
  \put(145,101.6){$x$}
  \put(62.5,139){$z$}
  \put(64.5,56.5){$y$}
\end{picture}
\\(a)&(b)\\    
  \end{tabular}

  \caption{The triangles surrounding the origin in an optimal
interpolating
    triangulation of a saddle surface. Depending on the chosen
    orientation, 
 the
approximating triangulation 
will
predominantly lie (a) above the surface or (b) below the surface.}
  \label{fig:lifted}
\end{figure}

Incidentally, the same fallacious line of
reasoning has led L. Fejes T\'oth,
in his celebrated book \emph{Lagerungen in der Ebene, auf der Kugel
  und im Raum} from 1953~\citep[Section~V.12, p.~151]
{FT}, to assert erroneously that 
a
ruled surface such as a hyperboloid of one sheet
could be triangulated with
a triangle density of only $O(1/\sqrt\eps)$.
 For more details,
see
~\citet[Section~2.4]{wintraecken}.

\subsection{Non-interpolating approximation}
\label{sec:non-interpolating}

{In this section, we will prove our main result,
Theorem~\ref{main}, which concerns non-interpolating approximation.
\looseness-1%
When we try to improve the approximation by allowing the vertices of
the triangles to move away from the surface, we encounter a challenge:
In contrast to the convex case, some edges (the ascending ones) run
above $f$ and others (the descending ones) run below~$f$.
Figure~\ref{fig:lifted} shows that the linear approximation penetrates
the surface $f$, lying partially above it and below it.  It is
 not clear in which direction one should start moving the
vertices to improve the approximation. Accordingly,
\citet*{A07-pott-00}
 conjectured that the best approximation is
the  interpolating approximation.
We will see that this is not the case.

}

To make the problem manageable,
we impose the following constraint: Every vertex of the
approximations has the same offset $\Delta$ (positive or negative)
from the surface. This ensures that we can apply Lemma~\ref{area}, 
and it suffices to look for one triangle that maximizes the area.

Lemma~\ref{chord} must be modified to take into account the vertical
shift by $\Delta$.

\begin{lemma}\label{chord2}
  Let $p,q\in \mathbf{R}^2$ be two points. The maximum
vertical error between~$f$ and the linear interpolation between $f(p)
+\Delta
$
and $f(q)+\Delta$ is attained either at the midpoint $(p+q)/2$ or at
the endpoints $p$ and $q$, and its value is
\begin{equation*}
\max_{0\le\lambda\le 1}
|
(1-\lambda) f(p) +
\lambda f(q) + \Delta
-
f((1-\lambda)p +\lambda q) | = 
\max \{|\Delta|, |\Delta +\tfrac{f(q-p)}4|\}.
\eqno\qed
\end{equation*}
\end{lemma}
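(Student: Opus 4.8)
The plan is to reduce the claim to a one-dimensional computation along the chord $pq$, in exactly the spirit of the proof of Lemma~\ref{chord}. First I would apply Lemma~\ref{same} to translate the plane so that $p$ sits at the origin; this leaves $\Delta z$ and all vertical distances untouched, and it does not change $f(q-p)$ because $f$ is a quadratic form. With $p$ at the origin the restriction of $f$ to the segment is the pure quadratic $\lambda\mapsto f(\lambda q)=\lambda^{2}f(q)=\lambda^{2}f(q-p)$, while the shifted linear interpolant equals $(1-\lambda)\bigl(f(p)+\Delta z\bigr)+\lambda\bigl(f(q)+\Delta z\bigr)=\Delta z+\lambda f(q-p)$ at parameter $\lambda$. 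Subtracting, the signed error along the chord is the quadratic polynomial $e(\lambda)=\Delta z+\lambda(1-\lambda)\,f(q-p)$ on $[0,1]$.

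The second step is to locate the extrema of $|e|$. If $f(q-p)=0$ — the chord lies along a ruling line — then $e\equiv\Delta z$ and the statement is immediate. Otherwise $e$ is a genuine degree-two polynomial whose only critical point is $\lambda=\tfrac12$, so $e$ is monotone on each of $[0,\tfrac12]$ and $[\tfrac12,1]$; consequently $|e|$ is maximized over $[0,1]$ at one of $\lambda\in\{0,\tfrac12,1\}$. Since $e(0)=e(1)=\Delta z$ and $e(\tfrac12)=\Delta z+\tfrac14 f(q-p)$, the maximum of $|e|$ equals $\max\{|\Delta z|,\,|\Delta z+\tfrac14 f(q-p)|\}$ and is attained at the endpoints $p,q$ or at the midpoint $(p+q)/2$, which is the assertion.

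I do not expect a real obstacle: the lemma is a small refinement of Lemma~\ref{chord}, and the only thing that needs attention is precisely what distinguishes it from the interpolating case. When $\Delta z=0$ the factor $\lambda(1-\lambda)$ forces the error to be largest at the midpoint, but for $\Delta z\ne0$ the vertical shift may make the endpoints the worst points instead, so one must retain all three candidates $\{0,\tfrac12,1\}$ rather than only the midpoint. What keeps the argument elementary is that the error restricted to any chord is a single quadratic in the chord parameter with exactly one interior critical point, so no finer analysis of the edge is required.
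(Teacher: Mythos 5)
Your reduction is exactly the one the paper intends (it omits the proof of Lemma~\ref{chord2} altogether, treating it as the obvious modification of the proof of Lemma~\ref{chord}): translate $p$ to the origin by Lemma~\ref{same}, observe that the signed error along the chord is the quadratic $e(\lambda)=\Delta z+\lambda(1-\lambda)f(q-p)$, and evaluate $|e|$ at the only candidates $\lambda\in\{0,\tfrac12,1\}$. That part of your argument is complete and correct.

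What you should not do is declare that $\max\{|\Delta z|,\,|\Delta z+\tfrac14 f(q-p)|\}$ ``is the assertion'': the lemma as printed asserts $\max\{|\Delta z|,\,|\Delta z-\tfrac14 f(q-p)|\}$, and the two expressions genuinely differ (take $\Delta z=1$ and $f(q-p)=4$: your value is $2$, the printed one is $1$). Your sign is in fact the correct one --- for $f(x,y)=2xy$, $p=(0,0)$, $q=(1,1)$, $\Delta z=1$, the shifted chord sits at height $2$ over the midpoint while $f(\tfrac12,\tfrac12)=\tfrac12$, so the midpoint error is $\tfrac32=\Delta z+\tfrac14f(q-p)$, not $\tfrac12$. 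The printed formula amounts to replacing $\Delta z$ by $-\Delta z$; this slip propagates consistently through Section~\ref{sec:non-interpolating} (it merely flips the sign of the optimal offset, $-\eps/3$ versus $+\eps/3$, for the chosen configuration of two ascending edges) and leaves Theorem~\ref{main} intact. But in a proof of the lemma \emph{as stated} you must either adopt the opposite sign convention for $\Delta z$ explicitly or flag the discrepancy, rather than silently identifying the two expressions.
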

Let us fix a point $p_1$ and
ask for the possible
locations
of a point $p_2$ such that the approximation error on the edge
$p_1p_2$
does not exceed $\eps$.
Assuming that $|\Delta|\le\eps$,
we can rewrite the condition $|\Delta +{f(q-p)}/4|\le\eps$, and
we see that the vector $(x,y)=(x_2-x_1,y_2-y_1)$ must satisfy the inequalities
$$
-(\eps +\Delta) \le xy/2 \le  \eps -\Delta .
$$
This is a region bounded by two different hyperbolas, see
Figure~\ref{fig:boundary}b,
 but
the arguments from the previous section about the concavity of the
region remain valid, showing that the error must be attained at all
three edges
(Lemma~\ref{tight}). As before, we can also assume that $p_1$ lies at the
origin and $p_2$ and $p_3$ lie in the first quadrant.
(To achieve the last situation, we may have to switch the sign of~$\Delta$.)
In this situation, $f(x_2,y_2)$ and
 $f(x_3,y_3)$ are positive, and we have
$x_2y_2 =
x_3y_3 = 2(\eps-\Delta )
$. Again, by a pseudo-Euclidean transformation, we simplify the
computation by assuming the symmetric situation
$(x_3,y_3)=(y_2,x_2)$.
 The third edge $p_2p_3$ is descending, because it is a chord of the
 hyperbola in the first quadrant.
Thus, with $p_3-p_2=(x_3-x_2,y_3-y_2)$,
the quadratic function
$f(p_3-p_2)$ will be negative, and we get the equation
$f(p_3-p_2)=-4(\eps+\Delta)$.
The term $f(p_3-p_2)$ evaluates to
$f(p_3-p_2)=-2(y_2-x_2)^2
=-2(2(\eps-\Delta )/x_2-x_2)^2$.
Solving the resulting quadratic equation
$$
x_2^2 \pm \sqrt{2(\eps+\Delta)}x_2 - 2(\eps-\Delta)=0
$$
 gives
\begin{equation*}
  p_2=(x_2,y_2)=
\Bigl(\sqrt{\tfrac12}\cdot
(\sqrt{5\eps-3\Delta}\pm \sqrt{\eps+\Delta}),
\,
\sqrt{\tfrac12}\cdot
(\sqrt{5\eps-3\Delta}\mp \sqrt{\eps+\Delta})
\Bigr).
\end{equation*}
As in
\eqref{eq:triangle-area}, the area of a symmetric triangle $(0,0)$,
($x_2,y_2)$,
 ($y_2,x_2)$ is
$\tfrac12|(x_2+y_2)(x_2-y_2)|$. This evaluates to
$\sqrt{5\eps-3\Delta}\cdot\sqrt{\eps+\Delta}
=\sqrt3\cdot \sqrt{5\eps/3-\Delta}\cdot\sqrt{\eps+\Delta}$,
and this is maximized for $\Delta = \eps/3$, yielding an area
of $4/\sqrt3\cdot\eps$.
The necessary condition $|\Delta|\le\eps$ is fulfilled.
By Lemma~\ref{area}, this establishes Theorem~\ref{main}.
\qed

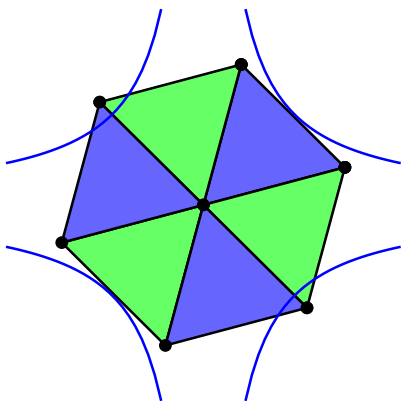
\begin{figure}[htb]
  \centering
  \begin{tikzpicture}[thick]
  \def\eps{0.25}
  \def\ptRad{2pt}

  \coordinate (p1) at (0,0);
  \coordinate (p2) at ($({3.10421*sqrt(\eps)},{0.83177*sqrt(\eps)})$);
  \coordinate (p3) at ($({0.83177*sqrt(\eps)},{3.10421*sqrt(\eps)})$);
  \coordinate (p4) at ($({-2.27244*sqrt(\eps)},{2.27244*sqrt(\eps)})$);
  \coordinate (p5) at ($({2.27244*sqrt(\eps)},{-2.27244*sqrt(\eps)})$);
  \coordinate (p6) at ($({-3.10421*sqrt(\eps)},{-0.83177*sqrt(\eps)})$);
  \coordinate (p7) at ($({-0.83177*sqrt(\eps)},{-3.10421*sqrt(\eps)})$);

  \draw[fill=blue!60] (p1) -- (p2) -- (p3) -- cycle;
  \foreach \i in {1,2,3}
  \fill (p\i) circle (\ptRad);

  \draw[fill=green!60] (p1) -- (p3) -- (p4) -- cycle;
  \draw[fill=green!60] (p1) -- (p2) -- (p5) -- cycle;
  \foreach \i in {1,2,3,4,5}
  \fill (p\i) circle (\ptRad);

  \draw[fill=green!60] (p1) -- (p6) -- (p7) -- cycle;
  \draw[fill=blue!60] (p1) -- (p5) -- (p7) -- cycle;
  \draw[fill=blue!60] (p1) -- (p6) -- (p4) -- cycle;
  \foreach \i in {1,2,...,7}
  \fill (p\i) circle (\ptRad);

  \pgfmathsetmacro{\minimalxvalue}{0.83177*sqrt(\eps)/0.9}
  \pgfmathsetmacro{\maximalxvalue}{4*sqrt(\eps)/0.83177*0.9}
  \draw[thick,blue,domain=\minimalxvalue:\maximalxvalue] plot (\x,{4*\eps/\x});
  \draw[thick,blue,domain=\minimalxvalue:\maximalxvalue] plot (\x,{-4*\eps/\x});
  \draw[thick,blue,domain=-\minimalxvalue:-\maximalxvalue] plot (\x,{4*\eps/\x});
  \draw[thick,blue,domain=-\minimalxvalue:-\maximalxvalue] plot (\x,{-4*\eps/\x});
\end{tikzpicture}
  \caption{Optimal non-interpolating triangles, together with the
    hyperbolas
$|xy|=2\eps$.}
  \label{fig:optimal}
\end{figure}

Figure~\ref{fig:optimal} shows six reflected copies
of the optimal triangle
 surrounding the origin, together with
the hyperbolas
$xy=\pm2\eps$
 that were used to define the optimal interpolating
triangulation.
The optimal triangle turns out to be an \emph{equilateral} triangle
(of side length $\sqrt{8\eps/3}$), and it happens to touch the hyperbola
$xy=2\eps$.
We do not have an explanation for these 
 phenomena.

\citet{A07-pott-00}
proposed to call the optimal triangles for the interpolating
approximation of the function $xy$, as defined in
Section~\ref{sec:interpolating}, the \emph{equilateral} triangles of
pseudo-Euclidean geometry.
Maybe it would be more appropriate to reserve this name
for the triangles of Figure~\ref{fig:optimal} and their
pseudo-Euclidean transformations, in view of their remarkable properties.

\section{Concluding remarks}
\label{sec:conclusion}

The constants in
 Theorems~\ref{main} and~\ref{interpolating} are very close. Thus,
the freedom to use non-interpolating approximations seems to give
only a slight improvement.
This is in
 contrast to the case of convex functions
(Theorem~\ref{convex}), where
non-interpolating approximations are better 
by a factor of~2.

The optimality of the approximations found in Theorem~\ref{main} remains
open. If different vertices have different offsets, one is forced to
use more than just one type of triangle, and the situation becomes complicated.

In contrast to this, we know that the constants in
Theorems~\ref{interpolating} and~\ref{convex}
about non-convex interpolating approximation and about convex interpolations are best possible.
The expressions of
Theorems~\ref{interpolating} and~\ref{convex}
hold therefore as
lower bounds on the triangle density of any triangulation.
This follows from the proofs,
because a triangulation of density $D$ must contain triangles of
area at least
$1/D-\delta$ for arbitrarily small $\delta>0$.
 These lower bounds also
apply when we want to triangulate a bounded polygonal domain~$\Omega$. The
triangle density is then simply the number of triangles divided by the
area of $\Omega$.
 In this case, the bound cannot be achieved in general, because the grid has to adapt to
the boundary of $\Omega$, but it can be attained
asymptotically as $\eps\to0$.

Another question that would be worth while to be attacked would be good (or
optimal) triangulations for trivariate quadratic functions.


\bibliographystyle{plainnat}
\bibliography{saddletriang}

\end{document}